\makeatletter \@addtoreset{equation}{section}
\newcommand{\qev}{\hfill{\mbox{$\square$}}\medskip}
\newcommand{\N}{\ensuremath{\mathbb{N}}}
\newcommand{\Z}{\ensuremath{\mathbb{Z}}}
\newcommand{\R}{\ensuremath{\mathbb{R}}}
\newcommand{\C}{\ensuremath{\mathbb{C}}}
\renewcommand{\Re}{{\rm Re}\,}
\newcommand{\proof}{\noindent{\sc Proof.\ }}
\newcommand{\e}{e}
\newtheorem{definition}{Definition}[section]
\newtheorem{theorem}[definition]{Theorem}
\newtheorem{proposition}[definition]{Proposition}
\newtheorem{remark}[definition]{Remark}
\newtheorem{example}[definition]{Example}
\title{On the interpolation of discontinuous functions}
\author{Michele Campiti, Giusy Mazzone, Cristian Tacelli\thanks{Department of Mathematics ``E. De Giorgi, University of
Salento, P.O.Box 193, 73100 Lecce, Italy. E-mails: michele.campiti@unisalento.it, giusy.mazzone@unisalento.it, cristian.tacelli@unisalento.it.''}}
\date{}
\begin{document}

\maketitle

\pagestyle{plain}

\bigskip{}

\noindent \textit{AMS Classification (2000): 41A05, 11M35, 40A30, 54A20}

\bigskip{}

\noindent \textit{Keywords: Interpolation, Discontinuous Functions, Statistical Convergence, Lerch Zeta Function, Hurwith Zeta Function}

\begin{abstract}
Given a sequence of real numbers, we consider its subsequences converging to possibly different
limits and associate to each of them an index of convergence which depends on the density
of the associated subsequences. This index turns out to be useful for a complete description
of some phenomena in interpolation theory at points of discontinuity of the first kind.
In particular we give some applications to Lagrange and Shepard operators.
\end{abstract}

\section{An index of convergence}

The aim of this paper is to investigate the behavior of non converging sequences,
for which we can find suitable converging subsequences. The density of the subsequences
converging to a given limit determines an index of convergence in the sense of
Definition \ref{df:index}. Our main aim is to use this index in order to obtain
a complete description of the behavior of some sequences of interpolating operators
on functions with a finite number of discontinuity of the first kind. This problem
has been considered for a long time both for algebraic and trigonometric polynomials.
While for trigonometric polynomials we have some classical completely satisfactory results,
in the case of algebraic polynomials the situation is quite different.
Some properties of Shepard operators on functions with a discontinuity of the first kind
have been established in \cite{BDVM} and subsequent papers in terms of lower and upper limits, but the problem of a complete description remains substantially opened.

The introduction of the index of convergence in Definition \ref{df:index} allows us to give a solution to this problem. One of the main properties of this index resides in the fact that a sequence may converge to different real numbers having indices in the interval $[0,1]$.
In the case where only one real number has index 1, the concept can be related to that of statistical convergence considered in \cite{F1} and subsequently generalized in different ways (see, e.g., \cite{F2, F3, C1, DO}). Due to the particular formulation of the concept of statistical convergence and its subsequent extensions and generalizations, it has not been possibile to use it for a deeper analysis of the interpolation of discontinuous functions.

In this section we define the index of convergence and give some of its properties
and characterizations. In Sections \ref{sc:Lagrange} and \ref{sc:Shepard} we consider
the indices of convergence of Lagrange and respectively Shepard operators applied to functions having
a finite number of points of discontinuity of the first kind.

Let $K\subset \N$; the lower density and, respectively, the upper density of $K$ are defined by
\[
\delta_-(K):=\liminf_{n\to+\infty}\frac{\left|K\cap\{1,\dots,n\}\right|}{n}\,,\quad
\delta_+(K):=\limsup_{n\to+\infty}\frac{\left|K\cap\{1,\dots,n\}\right|}{n}\,.
\]
In the case where $\delta_-(K)=\delta_+(K)$ the density of $K$ is defined as follows
\[
\delta(K):=\delta_-(K)=\delta_+(K)\;.
\]

We observe that $\delta_-(K)=1-\delta_+(K^c)$. Indeed
\begin{align*}
&\delta_-(K)=\liminf_{n\to+\infty}\frac{\left|K\cap\{1,\dots,n\}\right|}{n}
=\liminf_{n\to+\infty}\frac{\left|K\cap\{1,\dots,n\}\right|+n-n}{n}\\
&\quad =\liminf_{n\to+\infty}\left(1-\frac{n-\left|K\cap\{1,\dots,n\}\right|}{n}\right)
=1+\liminf_{n\to+\infty}\left(-\frac{\left|K^c\cap\{1,\dots,n\}\right|}{n}\right)\\
&\quad =1-\limsup_{n\to+\infty}\left(\frac{\left|K^c\cap\{1,\dots,n\}\right|}{n}\right)=1-\delta_+(K^c).
\end{align*}

Similarly, it can be shown that $\delta_+(K)=1-\delta_-(K^c)$.

%
%
%
%
We are now in a position to make the following definition.

\begin{definition} \label{df:index}
Let $(x_n)_{n\geq1}$ be a sequence of real numbers.
For every real number $L$, the \emph{index of convergence of the sequence} $(x_n)_{n\geq1}$
\emph{to} $L$ is defined by
\[
i\left( x_n;L \right):=1-\sup_{\varepsilon>0}\delta_+\left(\left\{n\in \N\ |\ x_n\in ]-\infty,L-\varepsilon]\cup
[L+\varepsilon,+\infty[\right\}\right)\;.
\]
Moreover, we also set
\begin{align*}
i\left( x_n;+\infty \right)&:=1-\sup_{M\in\R}\delta_+\left(\left\{n\in \N\ |\ x_n\in ]-\infty,M]\right\}\right)\;,
\\
i\left( x_n;-\infty \right)&:=1-\sup_{M\in\R}\delta_+\left(\left\{n\in \N\ |\ x_n\in[M,+\infty[\right\}\right)\;.
\end{align*}
\end{definition}

\begin{remark} \label{rm:chardelta-} \rm
We point out the following explicit expression of the index of convergence of a sequence $(x_{n})_{n\geq1}$
\begin{align*}
i(x_{n};L)&=1-\sup_{\varepsilon>0}\delta_{+}\left(\{n\in \N\ |\ x_{n}\in ]-\infty,L-\varepsilon]
\cup[L+\varepsilon, +\infty[\} \right)=
\\
&=1+\inf_{\varepsilon>0}\left(-\delta_{+}\left(\{n\in \N\ |\ x_{n}\in ]-\infty,L-\varepsilon]
\cup[L+\varepsilon,+\infty[\}
\right)\right)=
\\
&=\inf_{\varepsilon>0}\left(1-\delta_{+}\left(\{n\in \N\ |\ x_{n}\in ]-\infty,L-\varepsilon]
\cup[ L+\varepsilon,+\infty[\}
\right)\right)=
\\
&=\inf_{\varepsilon>0}\delta_{-}\left(\{n\in \N\ |\ x_{n}\in ]L-\varepsilon,L+\varepsilon[\}\right).
\end{align*}
\end{remark}

Definition \ref{df:index} can be extended as follows.

We set for brevity $B_\varepsilon:=]-\varepsilon,\varepsilon[$ whenever $\varepsilon>0$.

\begin{definition}
Let $(x_{n})_{n\geq1}$ be a sequence of real numbers and let $A$ be a subset of $\R$. We define \emph{index
of convergence of} $(x_{n})_{n\geq1}$ \emph{relatively to} $A$
\[
i(x_{n},A):=1-\sup_{\varepsilon>0}\delta_{+}(\{n\in \N\ |\ x_{n}\notin A+B_\varepsilon\}).
\]
\end{definition}

Also in this case we have the following expression of the index of convergence
\[
i(x_{n},A)=\inf_{\varepsilon>0}\delta_{-}(\{n\in \N\ |\ x_{n}\in A+B_\varepsilon\})\;.
\]

\begin{example} \label{ex:cos_equi} \rm
\begin{itemize}
\item[i)]
As a simple example, we can take $x_n:=\cos n\pi/2$, it is easy to recognize that
\[
i(x_n;0)=\frac 12,\ \ i(x_n;1)=\frac 14,\ \ i(x_n;-1)=\frac 14.
\]

\item[ii)]
As a further example, let $\alpha \in [0,1[$ be irrational, $\beta \in [0,1[$ and consider
\[
x_n:=n\alpha +\beta-[n\alpha+\beta]\quad(=n\alpha +\beta\mod 1)
\]
where $[x]$ denotes the integer part of $x$.

The well-known equidistribution theorem of Weyl ensures that
\[
 \lim _{n\to \infty}\frac 1n \sum _{k=0}^{n-1}f(k\alpha+\beta\mod 1)=
\int_0^1f(t)dt
\]
for each Riemann integrable function in $[0,1]$. Then
it follows that
\[\delta \left(\left\{n\in \N\ |\ x_n\in A\right\}\right)=|A|\]
for every Peano-Jordan measurable set $A\subset [0,1[$, where $|\cdot|$ denotes the Peano-Jordan measure.
Then
\[
i(x_n;A)=|A|.
\]
\end{itemize}
\end{example}

\begin{remark} \label{rm:particular_cases} \rm
If the index of convergence of a sequence $(x_{n})_{n\geq1}$ to a real number $L$ is equal to 1,
we have
$$
\sup_{\varepsilon>0}\delta_{+}\left(\{n\in \N\ |\  x_{n}\in ]-\infty,L-\varepsilon]\cup[L+
\varepsilon,+\infty[\}\right)=0.
$$
Hence, for all $\varepsilon>0$
\begin{align*}
0&=\delta_{+}\left(\{n\in \N\ |\ x_{n}\in ]-\infty,L-\varepsilon]\cup[L+
\varepsilon,+\infty[\}\right)
\\
&\geq \delta_{-}\left(\{n\in \N\ |\ x_{n}\in ]-\infty,L-\varepsilon]\cup[L+
\varepsilon,+\infty[\}\right)\geq 0\;,
\end{align*}
consequently
$\delta\left(\{n\in \N\ |\ x_{n}\in ]-\infty,L-\varepsilon]\cup[L+
\varepsilon,+\infty[\}\right)=0$,
which means that $(x_{n})_{n\geq1}$ converges statistically to $L$.

\end{remark}

In the next proposition we point out some relations between the index of convergence to a number $L$ and the density of suitable subsequences converging to $L$.

\begin{proposition}\label{prop:char_index}
Let $(x_n)_{n \geq1}$ be a sequence of real numbers and $\sigma\in]0,1]$. Then
$i(x_n,L)\geq \sigma$ if and only if there exists a subsequence $(x_{k(n)})_{n\geq1}$ converging to $L$ such that
\[
\delta_{-} \left(\{k(n)\;|\;n\in \N\} \right)\ge \sigma\;.
\]
\end{proposition}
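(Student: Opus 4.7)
The plan is to pass through the characterization given in Remark \ref{rm:chardelta-}: the condition $i(x_n;L) \geq \sigma$ is equivalent to
\[
\delta_-\bigl(\{n \in \N : |x_n - L| < \varepsilon\}\bigr) \geq \sigma \qquad \text{for every } \varepsilon > 0.
\]
Once this reformulation is in place, the two directions become a soft implication and a construction.

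For the easy implication ($\Leftarrow$), starting from a subsequence $(x_{k(n)})$ converging to $L$ with $\delta_-(K) \geq \sigma$ for $K := \{k(n) : n \in \N\}$, I fix $\varepsilon > 0$. Only finitely many $n$ satisfy $|x_{k(n)} - L| \geq \varepsilon$, so $K$ is contained, up to a finite set, in $\{m \in \N : |x_m - L| < \varepsilon\}$; monotonicity of $\delta_-$ together with invariance under finite modifications then yields the desired lower bound on the density of the latter set, and arbitrariness of $\varepsilon$ gives $i(x_n;L) \geq \sigma$.

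For the harder implication ($\Rightarrow$), I build $K$ by a diagonal procedure against the nested family $A_1 \supset A_2 \supset \cdots$ with $A_j := \{n \in \N : |x_n - L| < 1/j\}$. By hypothesis $\delta_-(A_j) \geq \sigma$ for every $j$, so one can choose integers $1 = N_1 < N_2 < \cdots$ with $|A_j \cap \{1,\dots,n\}|/n \geq \sigma - 1/j$ whenever $n \geq N_j$. I then set
\[
K := \bigcup_{j \geq 1} \bigl( A_j \cap \{N_j, N_j+1, \dots, N_{j+1}-1\} \bigr),
\]
and take $(k(n))$ to be its increasing enumeration. Convergence $x_{k(n)} \to L$ is immediate: given $\varepsilon > 0$ and $j$ with $1/j < \varepsilon$, any $k \in K$ with $k \geq N_j$ lies in some block $A_i \cap \{N_i,\dots,N_{i+1}-1\}$ with $i \geq j$, hence in $A_j$ by nestedness, so $|x_k - L| < 1/j < \varepsilon$.

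The main step is the density estimate for $K$, and this is where I expect the real subtlety. The trick is to exploit the nesting in the opposite direction: the inclusion $A_m \subset A_i$ for $i \leq m$ lets me shrink every earlier block appearing in $K$ down to its intersection with $A_m$. Thus for $n \in \{N_m, \dots, N_{m+1}-1\}$,
\[
K \cap \{1,\dots,n\} \supset A_m \cap \{1,\dots,n\},
\]
whence $|K \cap \{1,\dots,n\}|/n \geq \sigma - 1/m$ by the choice of $N_m$. Since $m \to \infty$ as $n \to \infty$, this forces $\delta_-(K) \geq \sigma$ and completes the argument.
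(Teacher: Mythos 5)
Your proposal is correct and is essentially the paper's own proof: both rest on the reformulation $i(x_n;L)=\inf_{\varepsilon>0}\delta_{-}\left(\{n\in\N\ |\ |x_n-L|<\varepsilon\}\right)$ from Remark \ref{rm:chardelta-}, and your set $K=\bigcup_{j\ge1}\bigl(A_j\cap\{N_j,\dots,N_{j+1}-1\}\bigr)$ coincides, thanks to the nestedness $A_1\supset A_2\supset\cdots$, with the paper's $K=\bigcup_{n\ge1}\bigl(M_{1/n}\cap\{1,\dots,\nu_{n+1}\}\bigr)$, with the same blockwise density estimate $|K\cap\{1,\dots,n\}|\ge|A_m\cap\{1,\dots,n\}|$ for $n$ in the $m$-th block. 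The converse direction (discarding finitely many indices and using invariance of $\delta_{-}$ under finite modifications) likewise matches the paper's argument.
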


\proof $\Rightarrow$)\quad
For every $n\ge1$, we consider the set $M_{1/n}:=\{m\in\N\ |\ |x_m-L|<1/n\}$.
{}From Remark \ref{rm:chardelta-}, for every $n\in \N$ there exists $\tilde \nu_n$ such that
\[
\frac{\left| M_{1/n}\cap \{1,2,\dots,j\}\right|}{j}\geq \sigma-\frac{1}{n}
\]
whenever $j>\tilde \nu _n$. At this point we define recursively a new sequence $(\nu_n)_{n\ge1}$
by setting $\nu_1=\tilde \nu_1$ and $\nu_n=\max \{\tilde \nu_n,\nu_{n-1}+1\}$. We have
\begin{equation}
  \label{eq:density-n2}
\frac{\left| M_{1/n}\cap \{1,2,\dots,j\}\right|}{j}\geq \sigma-\frac{1}{n}
\text{ for all } j>\nu_n\;.
\end{equation}

Consider the set of integers
\[
K=\bigcup_{n\ge1} \left( M_{1/n}\cap \{1,2,\dots,\nu_{n+1}\}\right)
\]
and the subsequence $(x_n)_{n\in K}$.

For every $\varepsilon >0$, let $m\in \N$ such that $1/m\leq \varepsilon$.
Then for every $k\in K$ satisfying $k>\nu_ m$
we have
$k\in  \bigcup_{n\geq  m} \left( M_{1/n}\cap \{1,2,\dots,\nu_{n+1}\}\right)$
and hence $|x_k-L|< \frac{1}{m}\leq \varepsilon$. This shows that the subsequence
$(x_n)_{n\in K}$ converges to $L$.

On the other hand, for every $j>\nu_m$, there
exists $l\geq m$ such that $\nu_l<j\leq \nu _{l+1}$ and
 thanks to \eqref{eq:density-n2}
we have
\begin{align*}
\frac{|K\cap\{1,2,\dots,j\}| }{j}&\geq \frac{\left| M_{1/l}\cap \{1,2,\dots,\nu_{l+1} \}\cap \{1,2,\dots,j\}\right|}{j}\\
&=\frac{\left| M_{1/l}\cap \{1,2,\dots, j\}\right|}{j}\geq \sigma-\frac{1}{l}\geq \sigma-\frac{1}{m} \geq \sigma-\varepsilon
\end{align*}
that is
\[
\liminf_{n \to \infty}\frac{|K\cap\{1,2,\dots,j\}| }{j}\geq \sigma\;.
\]

\noindent $\Leftarrow$)\quad
We suppose that there exists
a subsequence $(x_{k(n)})_{n\geq1}$ converging to $L$
such that such that
$\delta_{-} \left(\{k(n)\;|\;n\in \N\} \right)\ge \sigma$.
For every $\varepsilon > 0$ there exists $\nu_\varepsilon\in \N$ such
that $|x_{k(n)}-L|<\varepsilon$ whenever $n\geq \nu_\varepsilon$. Hence
\begin{align*}
\delta_-(\{n\in \N\;|\; |x_n-L|<\varepsilon\})&\geq
\delta_-(\{n\in \N\;|\; |x_{k(n)}-L|<\varepsilon\})\\
&=\delta_-(\{k(n)\;|\; n\geq \nu_\varepsilon\})\\
&=\delta_-(\{k(n)\;|\; n\in \N\})\ge
\sigma
\end{align*}
and therefore, from Remark \ref{rm:chardelta-}, we obtain $i(x_n,L)\geq \sigma$.
\qev

\begin{proposition}\label{pr:sum_gen}
Let $(x_{n})_{n\geq1}$ be a sequence of real numbers and $(A_m)_{m\ge1}$ a sequence of subsets of $\R$ such that
$\overline{A_{k}}\cap \overline{A_{j}}=\emptyset$ for all $k\neq j$. Then
\[
0\leq \sum^{+\infty}_{k=1}i(x_{n},A_{k})\leq 1.
\]

In particular, if $(L_m)_{m\geq1}$ is a sequence of distinct elements of $[-\infty,\infty]$ such that, for every
$m\geq 1$
\begin{align*}
i(x_n;L_m)=\alpha_m\;,
\end{align*}
for some $\alpha_{m}\ge0$, then
\[
0\leq \sum^{+\infty}_{k=1}\alpha_{k}\leq 1.
\]
\end{proposition}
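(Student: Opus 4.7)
The plan is to prove first the finite version, namely $\sum_{k=1}^N i(x_n,A_k)\le 1$ for every $N\ge 1$, and then pass to the limit $N\to\infty$ to obtain the infinite bound. Nonnegativity of each $i(x_n,A_k)$ is immediate from Remark \ref{rm:chardelta-}, since the index is an infimum of lower densities, all of which lie in $[0,1]$.

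For the finite estimate, I would fix $N$ and use the pairwise disjointness of $\overline{A_1},\dots,\overline{A_N}$ to pick a single $\varepsilon_0>0$ such that the enlargements $A_k+B_{\varepsilon_0}$ are still pairwise disjoint for all $k\neq j$ in $\{1,\dots,N\}$. Setting
\[
E_k:=\{n\in\N\mid x_n\in A_k+B_{\varepsilon_0}\},
\]
the sets $E_1,\dots,E_N$ are then pairwise disjoint subsets of $\N$, so for every $m\ge 1$
\[
\sum_{k=1}^N\frac{|E_k\cap\{1,\dots,m\}|}{m}\le 1.
\]
Taking $\liminf_{m\to\infty}$ and using the superadditivity of $\liminf$ for finite sums gives $\sum_{k=1}^N\delta_-(E_k)\le 1$. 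Since the definition of the index as an infimum over $\varepsilon>0$ yields $i(x_n,A_k)\le\delta_-(E_k)$ for each $k$, the finite bound follows; letting $N\to\infty$ concludes.

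The particular case of distinct values $L_m$ is obtained by applying the first part with $A_k=\{L_k\}$ when $L_k\in\R$, together with the analogous argument using the $M$-intervals in Definition \ref{df:index} when $L_k=\pm\infty$. The main obstacle is the separation step, i.e.\ producing a uniform $\varepsilon_0>0$ making all $\varepsilon_0$-enlargements simultaneously disjoint. For distinct points this is immediate, since finitely many distinct singletons in $[-\infty,+\infty]$ are trivially at positive pairwise distance; in the general case one needs the closures to be separated by a positive distance, which is automatic if one interprets the closure condition in the compactification $[-\infty,+\infty]$ but is not literally forced by disjointness of closures in $\R$ alone.
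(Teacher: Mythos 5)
Your proof follows essentially the same route as the paper's: fix $N$, use the disjointness of the closures to produce one $\varepsilon_0>0$ whose enlargements $A_k+B_{\varepsilon_0}$ are pairwise disjoint, bound $i(x_n,A_k)\le\delta_-(E_k)$ via the infimum formula, use superadditivity of $\liminf$ together with $\sum_{k=1}^N|E_k\cap\{1,\dots,m\}|\le m$, and let $N\to\infty$; the paper's proof is word-for-word this argument with $E_k=M^{(k)}_\varepsilon$.

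The caveat you raise about the separation step is well taken, and in fact it exposes a gap in the paper's own proof, which simply asserts that such an $\varepsilon$ ``can be chosen'': for unbounded sets, disjointness of the closures in $\R$ does not yield positive distance, and the statement as literally written can fail. Take $A_1=\{m\ |\ m\in\N,\ m\ge2\}$, $A_2=\{m+1/m\ |\ m\in\N,\ m\ge2\}$ and $x_m=m+\frac{1}{2m}$: the closures are disjoint in $\R$, yet for every $\varepsilon>0$ one has $x_m\in(A_1+B_\varepsilon)\cap(A_2+B_\varepsilon)$ for all large $m$, whence $i(x_m,A_1)=i(x_m,A_2)=1$ and the sum is $2$. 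Your proposed repair is the right one: reading the closures in the compactification $[-\infty,+\infty]$ (equivalently, requiring the sets to be pairwise at positive distance, since disjoint closed sets in the compactification cannot contain a common finite or infinite limit of pairs $a_k\in A_1$, $b_k\in A_2$ with $|a_k-b_k|\to0$) restores the uniform $\varepsilon_0$ and hence the whole argument. This strengthened hypothesis also covers every use the paper makes of the proposition: finitely many distinct points of $[-\infty,+\infty]$, handled as you indicate via the $M$-sets of Definition \ref{df:index} for $\pm\infty$, are trivially separated, and in the proof of Theorem \ref{th:Lagrange} the pair $A$, $(A+B_{\delta/2})^c$ is at distance at least $\delta/2$. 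So your proof is correct as qualified, matches the paper's approach, and your closing remark is a genuine (and fixable) criticism of the paper's statement rather than a defect of your argument.
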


\begin{proof}
Let $N\ge1$; since $\overline{A_{k}}\cap \overline{A_{j}}=\emptyset$ whenever $k\neq j$, we can choose
$\varepsilon$ such that
\[
(A_{k}+B_\varepsilon)\cap (A_{j}+B_\varepsilon)=\emptyset
\]
for all $k,j=1,\dots,N$, $k\neq j$.

Now consider the set
\[
M^{(k)}_{\varepsilon}:=\{n\in \N\ |\ x_{n}\in A_{k}+B_\varepsilon\}
\]
and observe that $M^{(k)}_{\varepsilon}\cap M^{(j)}_{\varepsilon}=\emptyset$
whenever $k,j=1,\dots,N$, $k\neq j$. Then we can conclude that
\begin{align*}
0\leq &\sum^{N}_{k=1}i(x_{n},A_{k})\leq \sum^{N}_{k=1}\delta_{-}(\{n\in \N\ |\ x_{n}\in A_{k}+B_\varepsilon
\})
\\
&=\sum^{N}_{k=1}\liminf_{n\rightarrow \infty}\frac{|M^{(k)}_{\varepsilon}\cap\{1,\dots,n\}|}{n}
\leq
\liminf_{n\rightarrow \infty}\left( \sum^{N}_{k=1}\frac{|M^{(k)}_{\varepsilon}\cap\{1,\dots,n\}|}{n} \right)
\\
&=\liminf_{n\rightarrow \infty}\frac{\left|\bigcup^{N}_{k=1}M^{(k)}_{\varepsilon}\cap\{1,\dots,n\}\right|}{n}
=\delta_{-}\left(\bigcup^{N}_{k=1}M^{(k)}_{\varepsilon}\right)\leq 1
\end{align*}
\qev
\end{proof}

\begin{remark} \label{rm:sum=1} \rm
Observe that if in the preceding Proposition we have $\sum_{k=1}^{+\infty}\alpha_{k}=1$, then every
subsequence $\left(x_{k(n)}\right)_{n\geq1}$ of $(x_{n})_{n\geq1}$
which converges to a limit $L$ different from $L_m,$\ $m\ge1$,
necessarily satisfies $\delta_{-}(\{k(n)\ |\ n\in \N\})=0$ and therefore $i(x_n;L)=0$.

Indeed, if
a subsequence $\left(x_{k(n)}\right)_{n\geq1}$ of $(x_{n})_{n\geq1}$ exists such that
$\delta_{-}(\{k(n)\ |\ n\in \N\})=\alpha>0$,
then by Proposition \ref{prop:char_index}, we get $i(x_{n},L)\geq \alpha$ and therefore
\[
\sum^{N}_{k=1}i(x_{n},L_{k})+i(x_{n},L)\geq \sum^{N}_{k=1}\alpha_{k}+\alpha>1
\]
which contradicts Proposition \ref{pr:sum_gen}.
\end{remark}

\section{Lagrange operators on discontinuous functions} \label{sc:Lagrange}

We consider the classical Lagrange operators at the Chebyshev nodes.

The $n$-th Lagrange operator 
is defined by
\[
L_nf(x)=\sum_{k=1}^n\ell_{n,k}(x)f(x_{n,k}),
\]
for every 
$f:\, [-1,1]\rightarrow \R$ and $x\in[-1,1]$, where for
$k=1,\ldots,n$
\[
x_{n,k}=\cos\theta_{n,k},\qquad\theta_{n,k}=\frac{(2k-1)\pi}{2n}\;,
\]
are the Chebyshev nodes and
\[
\ell_{n,k}(x)=\prod_{i\not=k}\frac{x-x_{n,i}}{x_{n,k}-x_{n,i}}
\]
are the corresponding fundamental polynomials.

Identifying the variable $x\in[-1,1]$ with $\cos\theta$, with
$\theta\in[0,\pi]$, the polynomials $\ell_{n,k}$ may also be
expressed in terms of the variable $\theta$ as follows
\[
\ell_{n,k}(\cos\theta)=\frac{(-1)^{k-1}}{n}\,\frac{\cos
n\theta}{\cos\theta-\cos\theta_{n,k}}\,\sin\theta_{n,k}\;.
\]

Our aim is to study the behavior of the sequence of Lagrange operators for a particular class of functions having a finite number of points of discontinuity of the first kind.

We begin to consider the function
$h_{x_0,d}:[-1,1]\rightarrow\R$ defined by
\begin{equation} \label{eq:def_h}
h_{x_0,d}(x):=\left\{
\begin{array}{ll}
0\;,\quad& x<x_0\;,\\
d\;,\quad& x=x_0\;,\\
1\;,\quad& x>x_0\;,
\end{array} \right. \qquad x\in[-1,1]\;,
\end{equation}
where $d$ is a fixed real number.

Before stating our main result, we need to introduce some zeta functions.
Firstly, consider the Hurwitz zeta function
\begin{equation} \label{eq:def_H}
\zeta(s,a):=\sum^{+\infty}_{n=0}\frac{1}{(n+a)^s}
\end{equation}
for all $s,a\in \C$ such that $\Re[s]>1$ and $\Re[a]>0$. The previous series is absolutely convergent and
its sum can be extended to a meromorphic function defined for all $s\neq 1$.

Moreover we need to consider also the Lerch zeta function
\[
\Phi(x,s,a):=\sum^{+\infty}_{n=0}\frac{\e^{2n\pi i x}}{(n+a)^s}
\]
where $x\in \R$, $a\in ]0,1]$,
$\Re[s]>1$ if $x\in \Z$ and $\Re[s]>0$ otherwise. In the special case $x=\frac12$, we obtain the Lerch zeta function
\[
J(s,a):=\Phi\left(\frac 12 ,s,a\right)=\sum^{+\infty}_{n=0}\frac{(-1)^n}{(n+a)^s}
\]
which is related to the Hurwitz zeta function by the following relation
\[
J(s,a)=\frac{1}{2^{s-1}}\, \zeta\left(s,\frac a2\right)-\zeta(s,a)
\]
for all $s,a\in \C$ such that $0<a\leq 1$ and $\Re[s]>1$.

In order to state our main result, we define the function $g:]0,1[\mapsto \R$ by setting
\[
g(x):=
\frac{ \sin \left(\pi x\right) }{\pi}\,
J(1,x)\;,\qquad \text{if } x\in ]0,1[\;.
\]

\begin{theorem} \label{th:Lagrange}
Let $x_0=\cos\theta_0\in]-1,1[$ and consider the functions $h:=h_{x_0,d}$ defined by \eqref{eq:def_h}. Then, the sequence of functions $\left( L_nh\right)_{n\ge1}$ converges uniformly to $h$
on every compact subsets of $[-1,1]\setminus\{x_0\}$.

As regards the behaviour of the sequence $(L_nh(x_0))_{n\ge1}$ we have
\begin{itemize}
\item[i)] If $\frac{\theta_0}{\pi}=\frac pq $ with $p,q\in \N$, $q\neq 0$ and $GCD(p,q)=1$, then
  \[
    i\left(L_nh(x_0);
    g\left( \frac{2m+1}{2q}\right)\right)=\frac1q\;,
    \quad m=0,\dots, q-1
  \]
if $q$ is odd and
  \[
    i\left(L_nh(x_0);d\right)=\frac1q\;,\qquad
    i\left(L_nh(x_0);
    g\left( \frac{m}{q}\right)\right)=\frac1q\;,
    \quad m=1,\dots, q-1
  \]
if $q$ is even.

\item [ii)] if $\frac{\theta_0}{\pi}$ is irrational and if
$A\subset \R$ is a Peano-Jordan measurable set,
then
\[
i\left(L_nh(x_0);A\right)=|g^{-1}(A)|\;,
\]
where $|\cdot|$ denotes the Peano-Jordan measure.
\end{itemize}
\end{theorem}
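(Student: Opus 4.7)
The strategy is to derive an asymptotic formula $L_n h(x_0) = g(\lambda_n) + o(1)$ in terms of the fractional part $\lambda_n := \{n\theta_0/\pi + \tfrac12\}$, and to combine it with Proposition~\ref{prop:char_index} (rational case) or with Example~\ref{ex:cos_equi}~(ii) (irrational case). Set $a_n := n\theta_0/\pi$, $k_n := \lfloor a_n + \tfrac12\rfloor$, $\lambda_n := a_n + \tfrac12 - k_n$, so that $\theta_{n,k_n}$ is the Chebyshev node nearest to $\theta_0$. Because $x_{n,k} > x_0 \iff \theta_{n,k} < \theta_0$ and $x_{n,k_n} = x_0 \iff \lambda_n = 0$, the definition of $h$ gives $L_n h(x_0) = \sum_{k=1}^{k_n}\ell_{n,k}(x_0)$ whenever $\lambda_n > 0$ and $L_n h(x_0) = d$ whenever $\lambda_n = 0$. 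Re-indexing $k = k_n + j$ and using $\cos(n\theta_0) = (-1)^{k_n}\sin(\pi\lambda_n)$ together with $\cos\theta_0 - \cos\theta_{n,k_n+j} = 2\sin\bigl(\theta_0 + \tfrac{(j-\lambda_n)\pi}{2n}\bigr)\sin\bigl(\tfrac{(j-\lambda_n)\pi}{2n}\bigr)$, each summand becomes
\[
\ell_{n,k_n+j}(x_0) = \frac{(-1)^{j-1}\sin(\pi\lambda_n)\,\sin\theta_{n,k_n+j}}{2n\sin\bigl(\theta_0 + \tfrac{(j-\lambda_n)\pi}{2n}\bigr)\sin\bigl(\tfrac{(j-\lambda_n)\pi}{2n}\bigr)},
\]
whose formal limit summed over $j \le 0$ (after reindexing $m = -j$) is exactly $g(\lambda_n) = \tfrac{\sin(\pi\lambda_n)}{\pi}\sum_{m \ge 0}\tfrac{(-1)^m}{m + \lambda_n}$.

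The central and hardest step is to justify
\[
L_n h(x_0) = g(\lambda_n) + o(1) \qquad (n \to \infty),
\]
uniformly in $\lambda_n$ on any compact subset of $(0,1)$. Since the underlying series is only conditionally convergent, termwise passage to the limit is insufficient; my plan is to split the sum at a cutoff $|j| \le J$, where first-order Taylor expansion of the three sines bounds the per-term error by $O(J/n)$, and at $|j| > J$, where Abel summation by parts uses the alternating sign $(-1)^{j-1}$ and the monotone decay of $1/(j-\lambda_n)$ to produce an $O(1/J)$ tail. A sublinear choice $J = J(n) \to \infty$ then closes the estimate. This is the main technical obstacle.

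Granted the asymptotic, the indices follow from the distribution of $\lambda_n$. In case (i), $\theta_0/\pi = p/q$ with $\gcd(p,q) = 1$ makes $\lambda_n = \{np/q + \tfrac12\}$ purely periodic: for $q$ odd, the subgroup generated by $2p$ in $\Z/2q\Z$ consists of the even residues, so $\lambda_n$ exhausts $\{(2m+1)/(2q) : 0 \le m < q\}$, each value attained with density $1/q$ and all nonzero; for $q$ even, $\{np \bmod q\}$ covers every residue, so $\lambda_n$ exhausts $\{m/q : 0 \le m < q\}$ with density $1/q$ each, and $m = 0$ is precisely the progression on which $x_0$ is a node and $L_n h(x_0) = d$. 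Proposition~\ref{prop:char_index} then transports each density $1/q$ into the stated index of convergence. In case (ii), $\theta_0/\pi$ irrational forbids $\lambda_n = 0$ and, by Example~\ref{ex:cos_equi}~(ii), makes $(\lambda_n)$ equidistributed on $[0,1)$; because $g$ is real-analytic on $(0,1)$ with only finitely many critical points, $g^{-1}$ preserves Peano--Jordan measurability, so the definition of the index combined with $L_n h(x_0) = g(\lambda_n) + o(1)$ yields $i(L_n h(x_0); A) = \inf_{\varepsilon > 0}|g^{-1}(A + B_\varepsilon)| = |g^{-1}(A)|$.

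Finally, the uniform convergence of $L_n h$ to $h$ on compacta of $[-1,1] \setminus \{x_0\}$ follows by the same mechanism at $x = \cos\theta$ with $\theta$ in a compact set $K$ bounded away from $\theta_0$: the denominators $|\cos\theta - \cos\theta_{n,k}|$ stay uniformly bounded below on $K$, so Abel summation on $\sum_{k \le k_n}(-1)^{k-1}\sin\theta_{n,k}/(\cos\theta - \cos\theta_{n,k})$ yields $\sum_{k \le k_n}\ell_{n,k}(x) = \one_{\theta < \theta_0} + O(1/n)$ uniformly on $K$; combined with $\sum_{k=1}^n \ell_{n,k}(x) \equiv 1$ this gives $L_n h(x) \to h(x)$ uniformly on $K$.
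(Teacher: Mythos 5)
Your proposal follows the same skeleton as the paper's proof (reduction to the fractional part $\sigma_n=n\theta_0/\pi+\frac12-[n\theta_0/\pi+\frac12]$, periodic subsequences in the rational case, Weyl equidistribution in the irrational case), but the step you yourself flag as ``the main technical obstacle'' --- the uniform asymptotic $L_nh(x_0)=g(\lambda_n)+o(1)$ --- is left as a plan, and the plan as stated has a concrete flaw. Abel summation on the tail $|j|>J$ requires the \emph{actual} coefficients $a_m=\frac{\sin\theta_{n,k_n-m}}{2n\,\sin(\theta_0-\frac{\pi(\lambda_n+m)}{2n})\sin(\frac{\pi(\lambda_n+m)}{2n})}$ to be monotone (or of uniformly bounded variation) in $m$; the ``monotone decay of $1/(j-\lambda_n)$'' you invoke is only their limiting form and does not control them. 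The missing ingredient is precisely what the paper supplies: the monotonicity of $t\mapsto \sin t/(\cos\theta_0-\cos t)$ on $]0,\theta_0[$, or equivalently the paper's cleaner device of subtracting $\frac{n}{\pi(\sigma_n+m)}$ termwise, which exhibits the total error as $\frac{\sin(\pi\sigma_n)}{n}$ times an alternating sum of the \emph{bounded monotone} function $g_{\theta_0}(x)=\frac{\sin x}{\cos\theta_0-\cos x}+\frac{1}{\theta_0-x}$, hence bounded by its two extreme values; this gives error $O(1/n)$ uniformly on all of $[0,1[$ with no cutoff at all. Note also that the paper's uniformity on $[0,1[$ (not merely on compacts of $(0,1)$, as you claim) is what its case (ii) argument uses; with your weaker uniformity an extra inner-approximation of $g^{-1}(A)$ by compacts would be needed and is not carried out. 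The same issue infects your uniform-convergence argument away from $x_0$: ``denominators uniformly bounded below'' plus Abel summation is not enough by itself, since summing $O(n)$ terms of size $O(1/n)$ gives only $O(1)$; one again needs the monotonicity of $t\mapsto\sin t/(\cos t-\cos\theta)$ so that the alternating sum is dominated by its largest single term, which is exactly how the paper gets the $O(1/n)$ bound.

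The second gap is in converting lower bounds on indices into the stated equalities. In case (i), Proposition~\ref{prop:char_index} only yields $i\ge \frac1q$ for each of the $q$ limits; your phrase ``transports each density $1/q$ into the stated index'' skips the equality step, which the paper obtains from Proposition~\ref{pr:sum_gen}: $q$ distinct limits each of index $\ge\frac1q$ force every index to equal $\frac1q$ (this also tacitly requires the $q$ values of $\tilde g$ to be pairwise distinct, as in the paper). In case (ii), your chain $i(L_nh(x_0);A)=\inf_{\varepsilon>0}|g^{-1}(A+B_\varepsilon)|=|g^{-1}(A)|$ is asserted rather than proven: the first equality needs $g^{-1}(A+B_\varepsilon)$ to be Peano--Jordan measurable (open sets need not be), which your real-analyticity remark gestures at but does not establish, and the second equality is exactly the reverse inequality $i\le|g^{-1}(A)|$, for which the paper gives a dedicated contradiction argument --- assuming $|g^{-1}(A)|<i$, choosing $\delta>0$ with $|g^{-1}(A+B_\delta)|=i$, and applying Proposition~\ref{pr:sum_gen} to $A$ and $(A+B_{\delta/2})^c$ together with $g^{-1}(\R)=]0,1[$ and the monotonicity of $\delta\mapsto|A+B_\delta|$. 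Without some substitute for these two mechanisms your proposal proves only the lower bounds $i\ge\frac1q$ and $i\ge|g^{-1}(A)|$, not the theorem.
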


\begin{proof}
Let $a=\cos \theta_1\in [-1,x_0[$ and
$x=\cos \theta\in [-1,a]$;
for sufficiently large $n\ge1$ there exists  $k_0$ such that
\[
0<\theta_{n,k_0}\leq \theta_0<\theta_{n,k_0+1}
<\theta_1\leq \theta\leq \pi
\]
and therefore
\[
0<\cos \theta_0-\cos \theta_1\leq \cos \theta_{n,k_0}-\cos \theta\;.
\]

We have $L_nh(\cos \theta)
=\sum_{k=1}^{k_0-1}
  \ell_{n,k}(\cos \theta)
  +d\ell_{n,k_0}(\cos \theta)$ if $\theta_{n,k_0}=\theta_0$, and
$L_nh(\cos \theta)=\sum_{k=1}^{k_0}
  \ell_{n,k}(\cos \theta)
$ if $\theta_{n,k_0}<\theta_0$;
hence
\begin{align*}
&L_nh(\cos \theta)\\&\quad=\sum_{k=1}^{k_0}
  \frac{(-1)^{k-1}}{n}\,\frac{\cos n\theta}{\cos\theta-\cos\theta_{n,k}}\,\sin\theta_{n,k}
+(d-1)\chi_{\left\{\theta_{n,k_0}\right\}}(\theta_0)\ell_{n,k_0}(\cos \theta)
  \\
&\quad=\sum_{k=1}^{k_0}
  \frac{(-1)^{k}}{n}\,
\frac{\cos n\theta}{\cos\theta_{n,k}-\cos\theta}\,\sin\theta_{n,k}
+(d-1)\chi_{\left\{\theta_{n,k_0}\right\}}(\theta_0)\ell_{n,k_0}(\cos \theta)\;.
\end{align*}

The function $t\to \frac{\sin t}{\cos t-\cos \theta}$
is positive and monotone increasing on the interval $[0, \theta[$;
since $0<\theta_{n,k}<\theta_{n,k+1}<\theta$ for every  $1\leq k\leq k_0$, we have
\begin{align*}
|L_n h(x)|&=|L_n(h)(\cos \theta)|\\&\leq
  \left|
  \frac{\cos n\theta}{n}\,\frac{\sin\theta_{n,k_0}}{\cos\theta_{n,k_0}-\cos\theta} \right|
+|d-1| \left|\frac{\cos n\theta}{n}\,\frac{\sin\theta_{n,k_0}}{\cos\theta_{n,k_0}-\cos\theta} \right|
  \\
  &\leq \frac{1+|d-1|}{n}\frac{1}{\cos\theta_{0}-\cos\theta_1}.
\end{align*}
It follows that $\left(L_n h\right)_{n\ge1}$ converges uniformly to $h$ in $[-1,a]$.

Now let
$b=\cos \theta_2\in ]x_0,1[$ and
$x=\cos \theta\in [b,1]$. For sufficiently large $n\ge1$   there
exists $k_0$ such that
\[
0\leq \theta\leq \theta_2<\theta_{n,k_0}
\leq \theta_0<\theta_{n,k_0+1}<2\pi
\]
and consequently
\[
0<\cos \theta_2-\cos\theta_0\leq  \cos \theta-\cos\theta_{n,k_0+1}\;.
\]
Then
\begin{align*}
|1-&L_nh(x)|=|1-L_nh(\cos \theta)|=\left| \sum_{k=1}^{n}\ell_{n,k}(\cos \theta)-
\sum_{k=1}^{k_0}\ell_{n,k}(\cos \theta)h(\cos \theta_{n,k})\right|\\
&=\left| \sum_{k=k_0+1}^{n}
  \frac{(-1)^{k}}{n}\,
\frac{\cos n\theta}{\cos\theta-\cos\theta_{n,k}}\,\sin\theta_{n,k}-
(d-1)\chi_{\left\{\theta_{n,k_0}\right\}}(\theta_0)\ell_{n,k_0}(\cos \theta)\right|\\
&\leq \left|
 \frac{\cos n \theta }{n}
\frac{\sin \theta_{n,k_0+1}} {\cos \theta- \cos \theta _{n,k_0+1}}
\right|+|d-1|\left|
 \frac{\cos n \theta }{n}
\frac{\sin \theta_{n,k_0}} {\cos \theta- \cos \theta _{0}}
\right|
\\
&\leq\frac{1+|d-1|}{n}\frac{1}{\cos\theta_2-\cos\theta_{0}}\;,
\end{align*}
since the function
$t\to \frac{\sin t}{\cos \theta-\cos t}$ is positive and monotone decreasing in
$]\theta,\pi]$ and
$\theta<\theta_{n,k-1}<\theta_{n,k}<\pi$ for every $k_0+1\leq k\leq n$.
So $\left(L_n h\right)_{n\ge1}$ converges uniformly to $h$ also in $[b,1]$.

Now, we study the behavior of $(L_nh(x_0))_{n\ge1}$.

For sufficiently large $n\ge1$ there exists  $k_0$ such that
$\theta_{n,k_0}\leq \theta_0<\theta_{n,k_0+1}$.
Let us denote $\displaystyle   \sigma_n=n\frac{\theta_0-\theta_{n,k_0}}{\pi}$.
{}From $\frac{2k_0-1}{2n}\pi \leq\theta_0<\frac{2k_0+1}{2n}\pi$ we have that
$0\leq \sigma_n <1$;
then
\[
n=\frac {\pi}{\theta_0}(\sigma_n+k_0-1/2)
\]
and moreover
\[
k_0\leq n\frac{\theta_0}{\pi}+\frac 12\leq k_0+1\;,
\]
that is $k_0=\left[ n\frac{\theta_0}{\pi}+\frac 12\right]$
and
\begin{equation} \label{eq:sigman}
\sigma_n=n\frac{\theta_0}{\pi}+\frac 12-\left[ n\frac{\theta_0}{\pi}+\frac 12\right]\;.
\end{equation}

If $x_0$ is a Chebyshev node, that is $\theta_0=\theta_{n,k_0}$ and $\sigma_n=0$, then
\begin{equation}\label{eq:dis}
L_nh(\cos \theta_0)=d\;.
\end{equation}
If $x_0$ is not a Chebyshev node we have
$\theta_0<\theta_{n,k_0}$,\ $0<\sigma_n<1$ and
\begin{equation}\label{eq:ln-nodis}
L_nh(\cos \theta_0)=\sum_{k=1}^{k_0}\ell_{n,k}(\cos \theta_0)\;.
\end{equation}

Let us consider the case where $x_0$ is not a Chebyshev node and observe that
\begin{align*}
\sum_{k=1}^{k_0}\ell_{n,k}(\cos \theta_0)
&=\sum_{k=1}^{k_0}
  \frac{(-1)^{1-k}}{n}\,\frac{\cos n\theta_0}{\cos\theta_0-\cos\theta_{n,k}}\,\sin\theta_{n,k}\\
&=\sum_{k=1}^{k_0}
  \frac{(-1)^{k_0-k+1}}{n}\,\frac{\sin (n\theta_0-k_0\pi+\pi/2)}{\cos\theta_0-\cos\theta_{n,k}}\,\sin\theta_{n,k}\\
&=\sum_{k=1}^{k_0}
\frac{(-1)^{k_0-k+1}}{n}\,\frac{\sin \left(n(\theta_0-\theta_{n,k_0})\right)}
{\cos\theta_0-\cos\theta_{n,k}}
\,\sin\theta_{n,k}\;.
\end{align*}
Setting $m=k_0-k$ we have
\begin{align*}
\theta_0 -\theta_{n,k}&=\theta_0 -\theta_{n,k_0-m}=
\theta_0 -\frac{2(k_0-m)-1}{2n}\pi=\theta_0-\theta_{n,k_0}+\frac mn\pi
\\
&=\frac{\pi}{n}(\sigma_n +m)
\end{align*}
and consequently
\begin{align}\label{eq:sum1}
&\sum_{k=1}^{k_0}\ell_{n,k}(\cos \theta_0)=
\sum_{m=0}^{k_0-1}
\frac{(-1)^{m+1}}{n}\,
\frac{\sin \left( \pi \sigma_n \right)}
 {\cos\theta_0-\cos\theta_{n,k_0-m}}
\,\sin\theta_{n,k_0-m} \nonumber
\\
&\quad=\frac{\sin(\pi \sigma_n)}{\pi}
  \sum_{m=0}^{k_0-1}\frac{(-1)^{m}}{\sigma_n+m}
\nonumber \\
&\qquad+\frac{\sin(\pi \sigma_n)}{n}\sum_{m=0}^{k_0-1}(-1)^{m+1}\,
 \left[\frac{\sin\theta_{n,k_0-m}} {\cos\theta_0-\cos\theta_{n,k_0-m}}
  +\frac n\pi\frac{1}{\sigma_n+m}\right]
\nonumber\\&\quad=\frac{\sin(\pi \sigma_n)}{\pi}
  \sum_{m=0}^{k_0-1}\frac{(-1)^{m}}{\sigma_n+m}
\nonumber \\
&\qquad+\frac{\sin(\pi \sigma_n)}{n}\sum_{m=0}^{k_0-1}(-1)^{m+1}\,
 \left[\frac{\sin\theta_{n,k_0-m}} {\cos\theta_0-\cos\theta_{n,k_0-m}}
  +\frac{1}{\theta_0-\theta_{n,k_0-m}}\right]
\nonumber\\&\quad=\frac{\sin(\pi \sigma_n)}{\pi}
  \sum_{m=0}^{k_0-1}\frac{(-1)^{m}}{\sigma_n+m}
\nonumber\\
&\qquad+\frac{\sin(\pi \sigma_n)}{n}
\sum_{m=0}^{k_0-1}(-1)^{m+1}\,
g_{\theta_0}(\theta_{n,k_0-m})
\end{align}
where
$\theta_{n,k_0-m}\in [\theta_{n,1},\theta_{n,k_0}]\subset ]0,\theta_0[$ and the function $g_{\theta_0}:\, ]0,\theta_0[\rightarrow \R$ is defined by setting
\[
g_{\theta_0}(x):=\frac{\sin x}{\cos\theta_0-\cos x}+\frac{1}{\theta_0-x}\;, \qquad x\in ]0,\theta_0[\;.
\]
The function $g_{\theta}$ is monotone decreasing and bounded since
\[
\lim_{x\rightarrow 0^+}g_{\theta_0}(x)=\frac{1}{\theta_0}<\infty\;,\qquad
\lim_{x\rightarrow \theta^{-}_{0}}g_{\theta_0}(x)=\frac 12 \cot(\theta_0)<\infty\;.
\]

For all $n\ge1$ and $\sigma \in [0,1[$, consider the function $f_n:[0,1[\rightarrow\R$ defined by setting
\begin{equation*}
f_{n}(\sigma):=
\left\{
\begin{array}{ll}
\displaystyle\frac{\sin(\pi \sigma)}{\pi}
\sum_{m=0}^{k_0-1}\frac{(-1)^{m}}{\sigma+m}\\
\qquad\displaystyle+\frac{\sin(\pi \sigma)}{n}\sum_{m=0}^{k_0-1}(-1)^{m+1}\,g_{\theta_0}(\theta_{n,k_0-m})\;,
&\text{if }\sigma \in ]0,1[\;,\\
d\;,&\text{if }\sigma=0\;;
\end{array}
\right.
\end{equation*}
taking into account \eqref{eq:dis}, \eqref{eq:ln-nodis} and
\eqref{eq:sum1} we have that
$L_n h(\cos \theta_0)=f_{n}(\sigma_n)$.

For all $\sigma \in ]0,1[$
\begin{align*}
|f_n(\sigma)-&g(\sigma)|
\\
&\leq\left|\frac{\sin(\pi\sigma)}{\pi}\sum^{\infty}_{m=k_0}\frac{(-1)^m}{\sigma+m} \right|+
\frac{\sin(\pi \sigma)}{n}
\left(|g_{\theta_0}(\theta_{n,1})|+|g_{\theta_0}(\theta_{n,k_0})| \right)
\\
&\leq \frac{\sin(\pi \sigma)}{\pi}\left|\frac{(-1)^{k_0}}{\sigma + k_0}\right|+
\frac{\sin(\pi \sigma)}{n}
\left(|g_{\theta_0}(\theta_{n,1})|+|g_{\theta_0}(\theta_{n,k_0})| \right)
\\
&\leq \frac{1}{\pi k_0}+
\frac{1}{n}
\left(|g_{\theta_0}(\theta_{n,1})|+|g_{\theta_0}(\theta_{n,k_0})| \right);
\end{align*}
where the righthand side is independent of $\sigma \in ]0,1[$ and
it converges to $0$ as $n\rightarrow \infty$ since
\[
\lim_{n\rightarrow \infty}g_{\theta_0}(\theta_{n,1})=
\lim_{x\rightarrow 0^+}g_{\theta_0}(x)=\frac{1}{\theta_0}<\infty
\]
and
\[
\lim_{n\rightarrow \infty} g_{\theta_0}(\theta_{n,k_0})=
\lim_{x\rightarrow {\theta_0}^-}g(x)=\frac 12 \cot(\theta_0)<\infty\;.
\]

Then we can conclude that the sequence $(f_n)_{n\ge1}$ converges uniformly on $[0,1[$ to the
function $\tilde g:[0,1[\rightarrow\R$ defined as follows
\begin{equation*}
\tilde{g}(x):=\left\{
\begin{array}{ll}
g(x)\;,\qquad&\text{if }x\in ]0,1[\;,
\\
d\;,&\text{if }x=0\;.
\end{array}
\right.
\end{equation*}

Now, we will construct $q$ subsequences
$\left(L_{k_m(n)}h(x_0)\right)_{n\ge1}$, $m=0,\dots,q-1$, of
$(L_{n}h(x_0))_{n\ge1}$ with density $\frac 1q$ such that
\[
\lim_{n\rightarrow\infty}L_{k_m(n)}h(x_0)=
\tilde{g}\left( \frac{m+q/2-[q/2]}{q}\right)
 \text{ for all } m=0,\dots, q-1\;.
\]

Fix $m=0,\dots,q-1$; since $GCD(p,q)=1$ we can set $k_m(n):=l+nq$, where $l\in \{-[q/2],\dots,[q/2]\}$ is such that
$lp\equiv m -[q/2] \mod q$, that is there exists $s\in \Z$ such that
$lp=sq+m-[q/2]$.

So, consider $\left(L_{k_m(n)}h(x_0)\right)_{n\ge1}$ and observe that
for all $m=0,\dots,q-1$, we have $\delta(\{k_m(n)\ |\ n\in \N\})=\frac 1q$. It follows, for all $n\ge1$
\begin{align*}
\sigma_{k_m(n)}&=(l+nq)\frac pq +\frac 12-\left[(l+nq)\frac pq +\frac 12\right]
\\
&=s+\frac{m+q/2-[q/2]}{q}+np-\left[ s+\frac{m+q/2-[q/2]}{q}+np \right]
\\
&=\frac{m+q/2-[q/2]}{q}
\end{align*}
since $s,np \in \Z$, while $0\leq \frac{m+q/2-[q/2]}{q}< 1$ because $0\leq q/2-[q/2]<1$.
Then
\begin{align*}
\lim_{n\rightarrow \infty}&L_{k_m(n)}h(x_0)
=\lim_{n\rightarrow \infty}f_n\left( \sigma_{k_m(n)}\right)\\
&\quad =\lim_{n\rightarrow \infty}
f_n \left( \frac{m+q/2-[q/2]}{q} \right)=\tilde{g}\left( \frac{m+q/2-[q/2]}{q} \right)\;.
\end{align*}

Therefore, by Proposition \ref{prop:char_index},
we have that
for all $m=1,\dots,q$
\[
i\left(L_nh(x_{0}),
\tilde{g}\left(\frac{m+q/2-[q/2]}{q}\right)\right)
\geq \frac 1q\;.
\]
Now, we have $q$ different statistical limits with index $\frac 1q$, so by Proposition \ref{pr:sum_gen}
it necessarily follows
\[
i\left(L_nh(x_{0});
\tilde{g}\left(\frac{m+q/2-[q/2]}{q}\right)\right)
=\frac 1q.
\]
In particular, if $q$ is even and $m=0$, for every $n\ge1$
we have $\theta_0=\theta_{k_m(n),k_0}$ and
\[
\lim_{n\rightarrow \infty}L_{k_m(n)}h(x_0)=\tilde{g}(0)=d.
\]
This completes the proof of part i).

Finally we consider the case where $\frac{\theta _0}{\pi}$ is irrational.
First, we observe that from \eqref{eq:sigman} and Example \ref{ex:cos_equi} (ii), we have
\[
\delta \left(\left\{n\in \N\ |\ \sigma_n\in J\right\}\right)=|J|
\]
for every Peano-Jordan measurable set $J\subset [0,1[$.

Let $A\subset \R$ be a bounded Peano-Jordan measurable set.
Since $(f_n)_{n\ge1}$ converges uniformly to $g$ in $]0,1[$, for every
$\varepsilon>0$ there exists $\nu_\varepsilon\ge 1$ such that $f_n(\sigma)\in A+B_\varepsilon$
whenever $n\geq \nu_\varepsilon$ and $\sigma \in g^{-1}(A)$.
So
\[
\left\{n\geq \nu_\varepsilon\ |\ \sigma_n\in g^{-1}(A)\right\}\subset
\left\{n\geq \nu_\varepsilon\ |\ L_{n}h(x_0)\in A+B_\varepsilon \right\}
\]
and we can conclude that
\[
|g^{-1}(A)|\leq
\delta_- \left(\left\{n\in \N \ |\ L_{n}h(x_0)\in A+B_\varepsilon\right\}\right)\;.
\]
Hence
\begin{equation}\label{eq:index_A}
|g^{-1}(A)|\leq i(L_n h(x_0);A)\;.
\end{equation}
In order to show the converse inequality, we argue by contradiction and assume that
$|g^{-1}(A)|<i(L_n h(x_0);A)$; then we can find $\delta>0$ such that
\[
|g^{-1}(A+B_\delta)|=i(L_n h(x_0);A)\;.
\]
The map $\delta\mapsto |A+B_\delta|$ is monotone increasing and continuous for $\delta\ge0$.

By Proposition \ref{pr:sum_gen}, since $\overline{A}\cap \overline{(A+B_{\delta/2})^c}=\emptyset$,
we have
\[
i(L_n h(x_0);A)+i(L_n h(x_0);(A+B_{\delta/2})^c)\leq 1.
\]
Using (\ref{eq:index_A}), we get $|g^{-1}((A+B_{\delta/2})^c)|\leq i(L_n h(x_0); (A+B_{\delta/2})^c)$; then
\[
|g^{-1}(A+B_\delta)|+|g^{-1}((A+B_{\delta/2})^c)|\leq 1
\]
and consequently, taking into account that $g^{-1}(\R)=]0,1[$,
\[
|g^{-1}(A+B_\delta)|\leq 1-|g^{-1}((A+B_{\delta/2})^c)|=|g^{-1}(A+B_{\delta/2})|
\]
which yields a contradiction, since the map $\delta\mapsto |A+B_\delta|$ is monotone increasing.
\qev
\end{proof}

At this point, using Theorem \ref{th:Lagrange}, we are able to study the behavior of Lagrange operators on larger classes of
functions, namely on the space BV([-1,1]) of functions of bounded variation having a finite number of points of discontinuity and on the space
$C_{\omega}+H$ where $C_\omega$ denotes the space of all functions
$f\in C([-1,1])$ satisfying the Dini-Lipschitz condition $\omega(f,\delta)=o(|\log\delta|^{-1})$,
and $H$ is the linear space generated by
\[
\{h_{x_0,d}\ |\ x_0\in]-1,1[,\ d\in\R\}\;.
\]

Observe that if $f\in C_\omega+H$ there exists at most a finite number of points $x_1,\dots,x_N$
of discontinuity with finite left and right limits $f(x_i-0)$ and $f(x_i+0)$,\ $i=1,\dots,N$.

Then we can state the following theorem.

\begin{theorem} \label{th:Lagrange_extended}
Let $f\in BV([-1,1])$, or alternatively $f\in C_\omega+H$, with a finite number $N$ of
points of discontinuity of the first kind at $x_1,\dots,x_N\in]-1,1[$.
For every $i=1,\dots,N$ consider $\theta_i\in]0,\pi[$ such that $x_i=\cos
\theta_i$,\ $d_i:=f(x_i)$ and define the function
\[
g_i(x):=f(x_i-0)+(f(x_i+0)-f(x_i-0))g(x)\;.
\]

Then, the sequence $\left( L_n f\right)_{n\ge1}$ converges uniformly
to $f$
on every compact subset of $]-1,1[\setminus\{x_1,\dots,x_N\}$.

Moreover for all $i=1,\dots,N$ the sequence $(L_n f(x_i))_{n\ge1}$ has the following
behavior
\begin{itemize}
\item[i)] if $\frac{\theta_i}{\pi}=\frac pq $ with $p,q\in \N$, $q\neq 0$ and
$GCD(p,q)=1$, then
  \[
    i\left(L_n f (x_i);
    g_i\left(\frac{2m+1}{2q}\right)\right)=\frac1q\;,
    \quad m=0,\dots, q-1;
  \]
if $q$ is odd and
  \[
    i\left(L_n f (x_i);d_i
  \right)=\frac1q\;,
  \]
  \[
    i\left(L_n f (x_i);
    g_i\left(\frac mq\right)
    \right)=\frac1q\;,
    \quad m=1,\dots, q-1.
  \]
if $q$ is even.

\item [ii)] If $\frac{\theta_i}{\pi}$ is irrational and if
$A\subset \R$ is a Peano-Jordan measurable set,
then
\[
i\left(L_nf(x_i);A\right)=
| g_i^{-1}(A)|
\;.
\]
\end{itemize}
\end{theorem}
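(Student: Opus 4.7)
The plan is to reduce Theorem~\ref{th:Lagrange_extended} to Theorem~\ref{th:Lagrange} by a jump-removal decomposition combined with the linearity of $L_n$. For each $i=1,\dots,N$ let $c_i := f(x_i+0)-f(x_i-0)$ denote the jump of $f$ at $x_i$ and, assuming $c_i\neq 0$, set $d''_i := (d_i-f(x_i-0))/c_i$. Define
\[
\tilde f := f - \sum_{i=1}^{N} c_i\, h_{x_i,d''_i}.
\]
A short pointwise check at each $x_i$ shows that $\tilde f$ is continuous at $x_i$ with $\tilde f(x_i)=f(x_i-0)$, while away from the $x_i$'s the residual inherits the regularity of $f$: $\tilde f\in C_\omega$ in the $C_\omega+H$ case, and $\tilde f\in C([-1,1])\cap BV([-1,1])$ in the $BV$ case.

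For the uniform-convergence conclusion, fix a compact set $K\subset\,]-1,1[\,\setminus\{x_1,\dots,x_N\}$. By linearity $L_n f = L_n\tilde f + \sum_{i=1}^{N} c_i\, L_n h_{x_i,d''_i}$. The first part of Theorem~\ref{th:Lagrange} gives $L_n h_{x_i,d''_i}\to h_{x_i,d''_i}$ uniformly on $K$ for each $i$; the convergence $L_n\tilde f\to\tilde f$ uniformly on $K$ follows in the $C_\omega+H$ subcase from the Dini--Lipschitz criterion (in fact uniformly on the whole of $[-1,1]$), and in the $BV$ subcase from the classical convergence theorem for Lagrange interpolation at the Chebyshev nodes applied to continuous functions of bounded variation. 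Summing the $N+1$ convergences yields $L_n f\to f$ uniformly on $K$.

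At a discontinuity point $x_i$ the decomposition reads
\[
L_n f(x_i) = c_i\, L_n h_{x_i,d''_i}(x_i) + r_n^{(i)},
\]
where $r_n^{(i)} := L_n\tilde f(x_i) + \sum_{j\neq i} c_j\, L_n h_{x_j,d''_j}(x_i)$. By the previous step, applied at the singleton $K=\{x_i\}$, we have $r_n^{(i)} \to \tilde f(x_i) + \sum_{j\neq i} c_j\, h_{x_j,d''_j}(x_i) = f(x_i-0)$. Since adding a convergent sequence does not alter the index and an affine rescaling $y_n = c z_n + b_n$ with $c\neq 0$ and $b_n\to b$ satisfies $i(y_n;A)=i(z_n;(A-b)/c)$ for every $A\subset\R$ (the $\varepsilon$-inflation $A+B_\varepsilon$ pulls back to $(A-b)/c + B_{\varepsilon/|c|}$ up to a vanishing error coming from $b_n-b$), the statistical limits of $L_n f(x_i)$ are exactly $f(x_i-0)+c_i v$ as $v$ ranges over the statistical limits of $L_n h_{x_i,d''_i}(x_i)$, with identical indices. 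Plugging in Theorem~\ref{th:Lagrange} and recognising both $f(x_i-0)+c_i g(x)=g_i(x)$ and $f(x_i-0)+c_i d''_i = d_i$ yields the values listed in~(i), while in~(ii) it gives $i(L_n f(x_i);A) = |g^{-1}((A-f(x_i-0))/c_i)| = |g_i^{-1}(A)|$. The only delicate ingredient is the uniform-convergence theorem for $L_n\tilde f$ in the $BV$ case, which has to be imported from the classical Chebyshev interpolation literature; every other step is routine bookkeeping with the linearity of $L_n$ and the affine-invariance of the index of convergence.
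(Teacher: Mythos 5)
Your proposal is correct and follows essentially the same route as the paper: the identical jump-removal decomposition $f=\tilde f+\sum_{k=1}^N c_k h_{x_k,\tilde d_k}$ (your $d''_i$ is the paper's $\tilde d_i$, your $\tilde f$ its $F$), the same appeal to the classical V\'ertesi and Szeg\H{o} theorems for the continuous part, and the same $\varepsilon$-inflation argument showing that adding a convergent sequence translates the index, which is exactly the paper's two inclusions \eqref{eq:111}--\eqref{eq:222}. The only cosmetic difference is that the paper re-runs the explicit subsequence construction of Theorem \ref{th:Lagrange} for part i), whereas you derive both i) and ii) uniformly from the affine-invariance of the index (with the same implicit assumption $c_i\neq 0$ that the paper makes), a harmless streamlining.
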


\begin{proof}
We assume $x_1<\dots<x_N$. We can write $f=F+\sum _{k=1}^{N}c_kh_k$,
where $F\in BV([-1,1])\cap C([-1,1])$ or, alternatively, $F\in C_\omega$ and
$h_i:=h_{x_i,\tilde d_i}$ for every $i=1,\dots,N$.

Since $F$ is continuous we have
\begin{align*}
f(x_i+0)-\sum_{k=1}^{i-1}c_k-c_i=F(x_i+0)=F(x_i-0)=f(x_i-0)-\sum_{k=1}^{i-1}c_k\;,
\end{align*}
from which
\[
c_i=f(x_i+0)-f(x_i-0)
\]
and
\begin{equation}\label{eq:F(x_i)}
 F(x_i)=f(x_i-0)-\sum_{k=1}^{i-1}c_k\;.
\end{equation}
Moreover
\begin{align*}
d_i=f(x_i)&=F(x_i)+\sum^{i-1}_{k=1}c_k h_k(x_i)+c_i \tilde d_i
\\
&=F(x_i)+\sum^{i-1}_{k=1}c_k +\left(f(x_i+0)-f(x_i-0)\right)\tilde d_i
\\
&=f(x_i-0)+\left(f(x_i+0)-f(x_i-0)\right)\tilde d_i\;.
\end{align*}
and hence
\[
\tilde d_i=\frac{d_i-f(x_i-0)}{f(x_i+0)-f(x_i-0)}\;.
\]

The first part of our statement is a trivial consequence of the linearity of
Lagrange interpolation operators. Indeed $L_n F\to F$ uniformly on compact subsets of $]-1,1[$
by \cite[Theorem 3.1, p. 24]{V} (see also \cite{MM})
if $F\in BV([-1,1])\cap C([-1,1])$ and by
\cite[Theorem 14.4, p. 335]{S} in the case $F\in C_{\omega}$).

Moreover for every $k=1,\dots,N$, by Theorem \ref{th:Lagrange}
$L_n h_k\to h_k$ converges uniformly to $h_k$ on compact subsets
of $[-1,1]\setminus \{x_k\}$. Then $L_n f=L_n F+\sum^{N}_{k=1}c_k L_n h_k$ converges
uniformly to $f$ on compact subsets of $]-1,1[\setminus\{x_1,\dots,x_N\}$.

Now we establish property i). We fix a point $x_i$ of discontinuity
and following the same line of the proof of Theorem \ref{th:Lagrange}
we construct the subsequences $(k_m(n))_{n\ge1}$,\ $m=1,\dots,q$.
Since
\[
L_{k_m(n)}f(x_i)=L_{k_m(n)} F(x_i)+
\sum_{
\genfrac{}{}{0pt}{}{k=1}{k\neq i}
}^{N}c_k L_{k_m(n)} h_k(x_i)+c_i
L_{k_m(n)} h_i(x_i)
\]
and taking into account \eqref{eq:F(x_i)} and that $F\in BV([-1,1])\cap C([-1,1])$
(or alternatively $F\in C_{\omega}$), from Theorem \ref{th:Lagrange}
the right-hand side converges to
\begin{align*}
&F(x_i)+\sum^{i-1}_{k=1}c_k h_k(x_i)+c_i g\left(\frac{2m+1}{2q}\right) \\
&\qquad =f(x_i-0)+\left(f(x_i+0)-f(x_i-0)\right)g\left(\frac{2m+1}{2q}\right)\\
&\qquad =g_i\left(\frac{2m+1}{2q}\right)
\end{align*}
for $m=0,\dots,q-1$, if $q$ is odd.

Analogously, if $q$ is even, $\left(L_n f(x_i)\right)_{n\ge1}$
converges to
\[
f(x_i-0)+\left(f(x_i+0)-f(x_i-0)\right)\tilde d_i=d_i
\]
with index $\frac 1q$ and to
\[
f(x_i-0)+\left(f(x_i+0)-f(x_i-0)\right)g\left(\frac mq\right)=g_i\left(\frac mq\right)
\]
with index $\frac 1q$ for $m=1,\dots,q-1$.

Finally, we prove property ii).
For every $i=1,\dots,N$ we have
\[
L_{n}f(x_i)=L_{n} F(x_i)+
\sum_{
\genfrac{}{}{0pt}{}{k=1}{k\neq i}
}^{N}
c_k L_{n} h_k(x_i)+c_i
L_{n} h_i(x_i)\;.
\]
For the sake of simplicity let us denote
\[
y_n:=L_n f(x_i)\;,\quad
z_n:=L_n F(x_i)+
  \sum_{\genfrac{}{}{0pt}{}{k=1}{k\neq i}}^{N}c_k L_n h_k(x_i)\;,\quad
x_n:=c_iL_n h_i(x_i)\;;
\]
thus $y_n=z_n+x_n$ and, from \eqref{eq:F(x_i)},
\[z:=F(x_i)+\sum_{k=1}^{i-1}c_k
h_k(x_i)=f(x_i-0)\;.
\]

Now, we can apply \cite[Theorem 3.1, p. 24]{V} if $F\in BV([-1,1])\cap C([-1,1])$
and \cite[Theorem 14.4, p. 335]{S} if $F\in C_{\omega}$ and in any case, from
Theorem \ref{th:Lagrange}, we get $z_n\rightarrow z$ and
$i(c_i^{-1}x_n;A)=|g^{-1}(A)|$ for every bounded Peano-Jordan measurable
set $A\subset \R$. Hence $i(x_n;A)=|g^{-1}(c_i^{-1}A)|$, that is
\[|g^{-1}(c_i^{-1}A)|
=\inf_{\varepsilon>0}
\delta_-(\{n\in \N\ |\ x_n\in A+B_\varepsilon\})\;.\]

Fix $\varepsilon>0$; if $x_n\in A+B_\varepsilon$, from the equality $x_n=y_n-z_n$ we get
\[
y_n\in A+B_\varepsilon+z_n=A+B_\varepsilon+z+z_n-z\;.
\]
Now, let $\nu \in \N$ such that $|z_n-z| < \varepsilon$ for all $n\geq \nu$,
Then for every
$n\geq \nu$ we have $z_n-z\in B_\varepsilon$ and consequently $y_n\in A+B_{2\varepsilon}+z$.
Then
\begin{align*}
\{n\geq \nu\ |\ x_n\in A+B_\varepsilon\}\subset \{n\geq \nu\ |\ y_n\in
A+B_{2\varepsilon}+z\},
\end{align*}
that is
\begin{align}\label{eq:111}
\delta_- (\{n\in \N\ |\ x_n\in A+B_\varepsilon\})
\leq
\delta_-( \{n\in \N\ |\ y_n\in A+B_{2\varepsilon}+z\})\;.
\end{align}

On the other hand, if $y_n\in A+B_{2\varepsilon}+z$,
then $x_n=y_n-z_n\in A+B_{2\varepsilon}+z-z_n$.
In this case for every $n\geq \nu$, we have
$z-z_n\in B_\varepsilon$ and therefore
$x_n\in A+B_{3\varepsilon}$; hence
\begin{align}\label{eq:222}
\delta_-
(\{n\in \N\ |\ x_n\in A+B_{3\varepsilon}\})\geq
\delta_- \{n\in \N\ |\ y_n\in A+B_{2\varepsilon}+z\}\;.
\end{align}

Taking the infimum over $\varepsilon>0$
in \eqref{eq:111} and \eqref{eq:222}
we can conclude that
$i(x_n,A)\leq i(y_n,A+z)\leq i(x_n,A)$ which yields
\[
i(y_n,A+z)=i(x_n,A)=|g^{-1}(c_i^{-1}A)|\;.
\]
We conclude that
$i(y_n,A)=|g^{-1}(c_i^{-1}(A-z))|
=\left|g^{-1}\left(\frac{A-f(x_i-0)}{f(x_i+0)-f(x_i-0)}\right)\right|=|g_i^{-1}(A)|$
for every bounded Peano-Jordan measurable set $A\subset \R$.
\qev
\end{proof}

\section{Shepard operators on discontinuous functions} \label{sc:Shepard}
Let $s\ge1$; the $n$-th Shepard  operator $S_{n,s}$ 
is defined by
\[
S_{n,s}f(x)=\frac{\sum _{k=0}^n f\left(\frac kn\right)
  \left|x-\frac kn\right|^{-s}        }
{\sum _{k=0}^n  \left|x-\frac kn\right|^{-s}}
\]
for every 
$f:\, [0,1]\rightarrow \R$ and $x\in[0,1]$.

For the general properties of Shepard operators we refer to \cite{DVM}. In particular
we pont out that the sequence $\left( S_{n,s}f\right)_{n\ge1}$ converges uniformly to $f$ for
every $f\in C([-1,1])$ (see \cite[Theorem 2.1]{DVM}).

Our aim is to study the behavior of the sequence of Shepard operators for
bounded functions which have a finite number of points of discontinuity
of the first kind and are continuous elsewhere.

Also in this case we begin by considering the function
$h_{x_0,d}:[0,1]\rightarrow\R$ defined by
\begin{equation} \label{eq:def_hS}
h_{x_0,d}(x):=\left\{
\begin{array}{ll}
1\;,\quad& x<x_0\;,\\
d\;,\quad& x=x_0\;,\\
0\;,\quad& x>x_0\;,
\end{array} \right. \qquad x\in[0,1]\;,
\end{equation}
where $x_0\in[0,1]$ and $d\in\R$ are fixed.

In order to state the convergence properties of the sequence $(S_{n,s}h_{x_0,d})_{n\ge1}$,
for every $s>1$ we consider the function $g_s:[0,1]\mapsto \R$ defined by setting, for every $x\in [0,1]$,
\[
g_s(x)=\frac{\zeta(s,x)}{\zeta(s,x)+\zeta(s,1-x)}\;,
\]
where $\zeta$ is the Hurwitz zeta function defined by \eqref{eq:def_H}.

We have the following result.

\begin{theorem} \label{th:shepard}
Let $x_0\in[0,1]$ and $h:=h_{x_0,d}$ be defined by \eqref{eq:def_hS}.
Then for every $s\ge1$ the sequence $\left( S_{n,s}h\right)_{n\ge1}$ converges uniformly to $h$
on every compact subset of $[0,1]\setminus\{x_0\}$.

As regards the behavior of the sequence $(S_{n,s}h(x_0))_{n\ge1}$ we have
\begin{itemize}
\item[i)]
If $x_0=\frac pq $ with $p,q\in \N$, $q\neq 0$ and $GCD(p,q)=1$, then
\[
i\left(S_{n,s}h(x_0);d\right)=\frac 1q
\]
and further
\begin{align*}
s>1 \ \Longrightarrow \ &   i\left(S_{n,s}h(x_0);g_s\left( \frac{m}{q}\right)\right)=\frac 1q\;,\qquad m=1,\dots, q-1\;,\\
s=1 \ \Longrightarrow \ &   i\left(S_{n,s}h(x_0);\frac12\right)=1-\frac 1q\;.
\end{align*}

\item [ii)] if $x_0$ is irrational and if
$A\subset \R$ is a Peano-Jordan measurable set, then
\begin{align*}
s>1 \ \Longrightarrow \ &   i\left(S_{n,s}h(x_0);A\right)=|g_s^{-1}(A)|\;,\\
s=1 \ \Longrightarrow \ &   i\left(S_{n,s}h(x_0);\frac12\right)=1\;.
\end{align*}
Moreover, in the case $s=1$, there exist subsequences of $(S_{n,s}h(x_0))_{n\ge1}$ converging to $0$ and $1$
(consequently the set of indices of these subsequences must have density zero).
\end{itemize}
\end{theorem}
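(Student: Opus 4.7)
I follow the template of Theorem \ref{th:Lagrange}: first establish uniform convergence on compact subsets of $[0,1]\setminus\{x_0\}$; then rewrite $S_{n,s}h(x_0)$ as $F_{n,s}(\sigma_n)$ for a suitable function $F_{n,s}$ and an arithmetic ``phase'' $\sigma_n\in[0,1)$; and finally combine the pointwise asymptotics of $F_{n,s}$ with the distributional behaviour of $\sigma_n$ (periodic in the rational case, equidistributed in the irrational case) via Propositions \ref{prop:char_index} and \ref{pr:sum_gen}.

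\textbf{Key reduction and asymptotics of $F_{n,s}$.} Setting $k_0:=[nx_0]$ and letting $\sigma_n:=nx_0-k_0$ denote the fractional part of $nx_0$, reindexing $k\le k_0$ by $m=k_0-k$ and $k\ge k_0+1$ by $m=k-k_0-1$, and cancelling the common factor $n^s$ in numerator and denominator, one finds, for $\sigma_n>0$,
\[
S_{n,s}h(x_0)=F_{n,s}(\sigma_n),\qquad F_{n,s}(\sigma):=\frac{A_{n,s}(\sigma)}{A_{n,s}(\sigma)+B_{n,s}(\sigma)},
\]
where $A_{n,s}(\sigma):=\sum_{m=0}^{k_0}(\sigma+m)^{-s}$ and $B_{n,s}(\sigma):=\sum_{m=0}^{n-k_0-1}(1-\sigma+m)^{-s}$; when $\sigma_n=0$ the point $x_0$ is itself a node and $S_{n,s}h(x_0)=d$. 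For $s>1$, monotone convergence gives $A_{n,s}(\sigma)\to\zeta(s,\sigma)$ and $B_{n,s}(\sigma)\to\zeta(s,1-\sigma)$, hence $F_{n,s}(\sigma)\to g_s(\sigma)$ uniformly for $\sigma$ in compact subsets of $(0,1)$. For $s=1$ the appropriate asymptotic is instead $A_{n,1}(\sigma)=\log n+O(1)$ and $B_{n,1}(\sigma)=\log n+O(1)$, uniformly for $\sigma\in[\varepsilon,1-\varepsilon]$, so $F_{n,1}(\sigma)\to 1/2$ on such sets. Uniform convergence on compact subsets of $[0,1]\setminus\{x_0\}$ follows from the standard Shepard-type estimate of \cite[Theorem 2.1]{DVM}, since the ``wrong-side'' nodes contribute at most $O(n)$ to the numerator while the denominator is of order $n^s$ (for $s>1$) or $n\log n$ (for $s=1$) from the nodes nearest $x$.

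\textbf{Rational case.} If $x_0=p/q$ with $\gcd(p,q)=1$, then $\sigma_n$ is periodic of period $q$ and visits each of the $q$ values $0,1/q,\dots,(q-1)/q$ exactly once per period, hence with density $1/q$. Along the density-$1/q$ subsequence where $\sigma_n=0$ we have $S_{n,s}h(x_0)=d$ identically, so $i(S_{n,s}h(x_0);d)\ge 1/q$. For $s>1$ and each $m\in\{1,\dots,q-1\}$, the density-$1/q$ subsequence with $\sigma_n=m/q$ converges to $g_s(m/q)$ by the previous paragraph, so Proposition \ref{prop:char_index} gives $i(\,\cdot\,;g_s(m/q))\ge 1/q$; since the $q$ resulting indices sum to at least $1$, Proposition \ref{pr:sum_gen} forces all of them to equal $1/q$. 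For $s=1$, the same $q-1$ subsequences each converge to $1/2$, and their union has density $(q-1)/q$, hence $i(S_{n,1}h(x_0);1/2)\ge(q-1)/q$; combined with $i(\,\cdot\,;d)\ge 1/q$ this again saturates Proposition \ref{pr:sum_gen}.

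\textbf{Irrational case.} If $x_0$ is irrational, Example \ref{ex:cos_equi}(ii) applied to $\sigma_n$ gives $\delta(\{n:\sigma_n\in J\})=|J|$ for every Peano-Jordan measurable $J\subset[0,1)$. For $s>1$, combining this with the uniform convergence $F_{n,s}\to g_s$ on compact subsets of $(0,1)$, the contradiction argument used in the irrational part of the proof of Theorem \ref{th:Lagrange} transfers verbatim to give $i(S_{n,s}h(x_0);A)=|g_s^{-1}(A)|$. For $s=1$, the uniform limit $F_{n,1}(\sigma)\to 1/2$ on $[\varepsilon,1-\varepsilon]$ together with $\delta(\{n:\sigma_n\in[\varepsilon,1-\varepsilon]\})=1-2\varepsilon$ yields $i(S_{n,1}h(x_0);1/2)=1$ upon letting $\varepsilon\to 0$. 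The last claim, existence of subsequences of $(S_{n,1}h(x_0))_{n\ge1}$ converging to $0$ and $1$, follows from Dirichlet's approximation theorem: infinitely many $n$ satisfy $\sigma_n<1/n^2$, and for such $n$ the single term $1/\sigma_n>n^2$ dominates $B_{n,1}(\sigma_n)=O(\log n)$, forcing $F_{n,1}(\sigma_n)\to 1$; a symmetric choice $1-\sigma_n<1/n^2$ gives the limit $0$. By Remark \ref{rm:sum=1} the associated index sets must have density zero. The main obstacle is the $s=1$ case: the divergence of the zeta series prevents a clean pointwise limit of $F_{n,s}$ to a fixed function, and the rare Diophantine events $\sigma_n\approx 0,1$ must be separated from the generic logarithmic cancellation that produces the limit $1/2$.
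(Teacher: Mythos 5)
Your proposal follows the paper's proof essentially step for step on all the main points: the same reduction $S_{n,s}h(x_0)=f_{n,s}(\sigma_n)$ with $\sigma_n=nx_0-[nx_0]$ (your $F_{n,s}$ is the paper's $f_{n,s}$), the same limit functions ($g_s$ for $s>1$, the constant $\frac12$ on $]0,1[$ for $s=1$, the value $d$ at $\sigma=0$), the same periodicity of $\sigma_n$ through the values $m/q$ combined with Propositions \ref{prop:char_index} and \ref{pr:sum_gen} in the rational case, the same equidistribution input and the same $[a,b]\subset\,]0,1[$ exhaustion for $s=1$ irrational. Your estimate for uniform convergence away from $x_0$ (wrong-side nodes contribute $O(n)$ to the numerator, denominator of order $n^s$ resp.\ $n\log n$) is exactly the paper's explicit computation, so the appeal to \cite[Theorem 2.1]{DVM} at that point is decorative rather than load-bearing. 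Two harmless remarks: for $s>1$ the convergence $f_{n,s}\to g_s$ is in fact uniform on all of $]0,1[$ (the tails $\sum_{m>k_0}m^{-s}$ are uniform in $\sigma$ and the denominator is bounded below), which the paper uses; with only your locally uniform version, the ``verbatim'' transfer of the irrational-case contradiction argument needs the extra step of intersecting $g_s^{-1}(A)$ with $[\eta,1-\eta]$ and letting $\eta\to0$. And like the paper, you tacitly assume $d$ is distinct from the other limit values when invoking Proposition \ref{pr:sum_gen}; that gloss is shared, not yours.

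The one genuinely different route is the final claim for $s=1$: the paper simply cites \cite[Theorem 2]{BDVM}, which gives $\limsup_n S_{n,1}h(x_0)=1$ and $\liminf_n S_{n,1}h(x_0)=0$, whereas you construct the extreme subsequences by hand via Diophantine approximation --- a nice, self-contained alternative, but your quantitative claim is wrong as stated. ``Infinitely many $n$ with $\sigma_n<1/n^2$'' is false for badly approximable $x_0$ (e.g.\ the golden ratio, where $\|nx_0\|\geq c/n$ for all $n$); Dirichlet only gives $\|nx_0\|<1/n$ infinitely often, and it controls the distance to the \emph{nearest} integer, not the side, so to get $\sigma_n$ small infinitely often \emph{and} $1-\sigma_n$ small infinitely often you should invoke, say, the alternating signs of $q_kx_0-p_k$ along continued-fraction convergents. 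The slip is repairable: your argument only needs $\sigma_n=o(1/\log n)$ along a subsequence so that the single term $1/\sigma_n$ dominates $B_{n,1}=O(\log n)$, and the true bound $\sigma_n<1/n$ on a sign-controlled subsequence amply suffices. So the proposal is correct in substance, with this one misstated (but easily fixed) Diophantine step.
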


\begin{proof}
We set $k_0=[nx_0]$, so that, for sufficiently large $n\ge1$,
$\frac{k_0}{n}\leq x_0<\frac{k_0+1}{n}$.

Let $a\in [0,x_0[$ and $x \in [0,a]$,
we have
\[
S_{n,s}h(x)=\frac{
  \sum _{k=0}^{k_0} \left|x-\frac kn\right|^{-s}
+\chi_{\{x_0 n\}}\left( k_0 \right) \left( d-1 \right)
\left|x-x_0\right|^{-s}}
{\sum _{k=0}^n  \left|x-\frac kn\right|^{-s}}\;,
\]
then
\begin{align}\label{eq:frac}
&S_{n,s}h(x)-1\nonumber \\
&\quad=\frac{
  \sum _{k=0}^{k_0} \left|x-\frac kn\right|^{-s}
+\chi_{\{x_0 n\}}\left( k_0 \right) \left( d-1 \right)
\left|x-x_0\right|^{-s}
-\sum _{k=0}^n  \left|x-\frac kn\right|^{-s}}
{\sum _{k=0}^n  \left|x-\frac kn\right|^{-s}}\nonumber \\
&\quad=\frac{
  \sum _{k=k_0+1}^{n} \left|x-\frac kn\right|^{-s}
+\chi_{\{x_0 n\}}\left( k_0 \right) \left( d-1 \right)
\left|x-x_0\right|^{-s}}
{\sum _{k=0}^n  \left|x-\frac kn\right|^{-s}}\;.
\end{align}
If $k> k_0$, since $x\leq a<x_0<\frac kn$, we have that
$\frac kn-x>x_0-a>0$, moreover $x_0-x>x_0-a$,
then
\begin{align}\label{eq:num}
\sum _{k=k_0+1}^{n} &\left|x-\frac kn\right|^{-s}
+\chi_{\{x_0 n\}}\left( k_0 \right) |d-1|
\left|x-x_0\right|^{-s}\nonumber \\
&\leq (n-k_0)|x_0-a|^{-s}+\chi_{\{x_0 n\}}\left( k_0 \right) | d-1|
\left|x_0-a\right|^{-s}\nonumber \\
&\leq (n-[nx_0]+|d-1|)\left|x_0-a\right|^{-s}<+\infty\;.
\end{align}
On the other hand we have
\begin{align}\label{eq:den}
\sum_{k=0}^{n}\left|x-\frac kn \right|^{-s}&\geq
 \sum_{[nx]< k \leq n}\left|x-\frac kn \right|^{-s}
 =n^s \sum_{[nx]< k \leq n}(k-nx)^{-s}\nonumber \\
&\geq n^s \sum_{[nx]< k \leq n}(k-[nx])^{-s}
=n^s \sum_{m=1}^{n-[nx]}m^{-s}
\geq n^s \sum_{m=1}^{n-[na]}m^{-s}\;.
\end{align}
{}From \eqref{eq:num} and \eqref{eq:den}, we can rewrite \eqref{eq:frac} as follows
\begin{align*}
 &|S_{n,s}h(x)-1|\leq
\frac{(n-[nx_0]+|d-1|)\left|x_0-a\right|^{-s}}
  {n^s \sum_{m=1}^{n-[na]}m^{-s}}
\end{align*}
where the righthand side converges to $0$ as $n \to \infty$.
Indeed if $s>1$,
\[\lim_{n\to \infty} \frac{(n-[nx_0]+d)}{n^s}=0\;,\] while if
$s=1$
\[\lim_{n\to \infty}\sum_{m=1}^{n-[na]}m^{-1}=+\infty\;.\]

Now let $x_0<a\leq x \leq 1$; using the same arguments we get
\begin{align*}
 &|S_{n,s}h(x)|\leq
\frac{([nx_0]+1+|d-1|)\left|x_0-a\right|^{-s}}
  {n^s \sum_{m=1}^{[na]+1}m^{-s}}\;.
\end{align*}
Then the sequence of functions $\left( S_{n,s}h\right)_{n\ge1}$ converges uniformly to $h$ on every compact subset of $[0,1]\setminus\{x_0\}$.

Now we focus our attention on the behavior of the sequence $(S_{n,s}h(x_0))_{n\ge1}$.
Let us denote
$\displaystyle   \sigma_n=nx_0-k_0$, that is
$\displaystyle   \sigma_n=nx_0-[nx_0]$ and observe that $0\leq \sigma_n<1$.
If $x_0$ coincides with a node then
\[S_{n,s}h(x_0)=d\] otherwise
\begin{align*}
S_{n,s}h(x_0)&=\frac{\sum _{k=0}^{k_0} \left|x_0-\frac kn\right|^{-s} }
{\sum _{k=0}^n  \left|x_0-\frac kn\right|^{-s}}\\
&=\frac{\sum _{k=0}^{k_0} \left|nx_0-k\right|^{-s} }
{\sum _{k=0}^n  \left|nx_0- k\right|^{-s}}
=
\frac{\sum _{k=0}^{k_0} \left|nx_0-k_0+k_0-k\right|^{-s} }
{\sum _{k=0}^n  \left|nx_0-k_0+k_0-k\right|^{-s}}\\
&=\frac{\sum _{m=0}^{k_0} \left(\sigma_n+m\right)^{-s} }
{
\sum _{m=0}^{k_0} \left(\sigma_n+m\right)^{-s}+
\sum _{m=0}^{n-k_0-1} \left(1-\sigma_n+m\right)^{-s}
}\;.
\end{align*}
Now, consider the function $f_{n,s}:[0,1[\rightarrow\R$ defined by setting
\begin{equation*}
f_{n,s}(\sigma):=
\left\{
\begin{array}{ll}
\displaystyle\frac{\sum _{m=0}^{k_0} \left(\sigma+m\right)^{-s} }
  {\sum _{m=0}^{k_0} \left(\sigma+m\right)^{-s}+
    \sum _{m=0}^{n-k_0-1} \left(1-\sigma+m\right)^{-s}}\;,
  &\text{ if }\sigma \in ]0,1[\;, \\
\\
  d\;, &\text{ if }\sigma=0\;.
\end{array}
\right.
\end{equation*}
We have $S_{n,s}h(x_0)=f_{n,s}(\sigma_n)$.

If $s>1$, the sequence $\left( f_{n,s} \right)_{n\ge1}$ converges uniformly
to the function $g^*_s$ given by
\begin{equation*}
g^*_s(\sigma )=
\left\{
\begin{array}{ll}
d\;, &\text{ if }\sigma=0\;,\\
g_s(\sigma)\;, &\text{ if } \sigma \in ]0,1[ \;.
\end{array}
\right.
\end{equation*}
If $s=1$ the sequence $\left( f_{n,s} \right)_{n\ge1}$ converges pointwise to
\begin{equation*}
g^*_1(\sigma )=
\left\{
\begin{array}{ll}
d\;, &\text{ if }\sigma=0\;,
\\
\frac12\;, &\text{ if } \sigma \in ]0,1[
\end{array}
\right.
\end{equation*}
and the convergence is uniform on every compact subset of $]0,1[$.

Arguing similarly to the proof of Theorem \ref{th:Lagrange}
we obtain property $i)$ for $s\geq 1$ and $ii)$ for $s>1$.

As regards the case $s=1$, we consider an interval $[a,b]\subset ]0,1[$. Since the sequence
$(f_{n,1})_{n\ge1}$ converges uniformly in $[a,b]$, for every $\varepsilon > 0$
there exists $\nu\in \N$ such that $|f_{n,1}(x)-\frac 12|\leq \varepsilon$
whenever $n\geq \nu$ and $x\in [a,b]$.
Then
\[
\{n\in\N\ |\ n\geq \nu, \sigma_n \in [a,b]\}
\subset
\left\{n\in \N\ |\ S_{n,1}h(x_0)\in \left]\frac 12-\varepsilon,\frac 12+\varepsilon
\right[\right\}\;,
\]
and since the sequence $\sigma _n$ is equidistributed  on $]0,1[$
\[\delta_-\left\{n\in\N\ |\ S_{n,1}h(x_0)\in
\left]\frac 12-\varepsilon,\frac 12+\varepsilon\right[\right\}\geq b-a\]
that is $i(S_{n,1}h(x_0);\frac 12)\geq b-a$ for every $0<a<b<1$.

It follows $i(S_{n,1}h(x_0);\frac 12)=1$.

Moreover from \cite[Theorem 2]{BDVM} we have that
\[
\limsup_{n\to \infty} S_{n,1}h(x_0)=\max \left\{\lim_{x\to x_0^-}h(x),\lim_{x\to x_0^+}h(x)\right\}=1
\]
and
\[
\liminf_{n\to \infty} S_{n,1}h(x_0)=\min \left\{\lim_{x\to x_0^-}h(x),\lim_{x\to x_0^+}h(x)\right\}=0\;,
\]
then we can construct subsequences of $(S_{n,1}h(x_0))_{n\ge1}$ converging to $0$ and $1$, but,
thanks to Remark \ref{rm:sum=1}
and Proposition \ref{prop:char_index}, they must have the
set of indices with density zero.
\qev
\end{proof}

Finally, we extend Theorem \ref{th:shepard} to a larger class of
functions.

\begin{theorem} \label{th:Shepard_extended}
Let $f$ be a bounded function with $N$ points of discontinuity of the first kind at $x_1,\dots,x_N\in]0,1[$
and continuous elsewhere.
For every $i=1,\dots,N$ consider $d_i:=f(x_i)$ and define the function
\[
g_{s,i}(x):=f(x_i+0)+(f(x_i-0)-f(x_i+0))g_s(x)\;.
\]

Then, for every $s\ge1$ the sequence $\left( S_{n,s} f\right)_{n\ge1}$ converges uniformly
to $f$
on every compact subset of $[0,1]\setminus\{x_1,\dots,x_N\}$.

Moreover for all $i=1,\dots,N$ the sequence $(S_{n,s} f(x_i))_{n\ge1}$ has the following
behavior
\begin{itemize}
\item[i)]
If $x_i=\frac pq $ with $p,q\in \N$, $q\neq 0$ and $GCD(p,q)=1$, then
\[
i\left(S_{n,s}h(x_i);d_i\right)=\frac 1q
\]
and further
\begin{align*}
s>1 \ \Longrightarrow \ &   i\left(S_{n,s}f(x_i);g_{s,i}\left( \frac{m}{q}\right)\right)=\frac 1q\;,\qquad m=1,\dots, q-1\;,\\
s=1 \ \Longrightarrow \ &   i\left(S_{n,s}f(x_i);\frac{f(x_i+0)+f(x_i-0)}2\right)=1-\frac 1q\;.
\end{align*}

\item [ii)] if $x_i$ is irrational and if
$A\subset \R$ is a Peano-Jordan measurable set, then
\begin{align*}
s>1 \ \Longrightarrow \ &   i\left(S_{n,s}f(x_i);A\right)=|g_{s,i}^{-1}(A)|\;,\\
s=1 \ \Longrightarrow \ &   i\left(S_{n,s}f(x_i);\frac{f(x_i+0)+f(x_i-0)}2\right)=1\;.
\end{align*}
Moreover, in the case $s=1$, there exist subsequences of $(S_{n,s}f(x_i))_{n\ge1}$ converging to $f(x_i-0)$
and $f(x_i+0)$ whose set of indices has density zero.
\end{itemize}
\end{theorem}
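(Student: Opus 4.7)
The plan is to imitate the proof of Theorem \ref{th:Lagrange_extended}: reduce $f$ to a continuous part plus a linear combination of basic jump functions, exploit the linearity of $S_{n,s}$, and then invoke Theorem \ref{th:shepard} at each discontinuity. Writing $f = F + \sum_{k=1}^{N} c_k h_k$ with $h_k := h_{x_k,\tilde d_k}$ as in \eqref{eq:def_hS}, $F$ continuous, and $x_1 < \dots < x_N$, matching one-sided limits and point values at each $x_i$ forces
\[
c_i = f(x_i-0) - f(x_i+0), \qquad \tilde d_i = \frac{d_i - f(x_i+0)}{f(x_i-0) - f(x_i+0)},
\]
and $F(x_i) = f(x_i+0) - \sum_{k>i} c_k$. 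The uniform convergence of $S_{n,s}f$ on compact subsets of $[0,1]\setminus\{x_1,\dots,x_N\}$ then follows at once from the linearity of the Shepard operators, from $S_{n,s}F \to F$ uniformly on $[0,1]$ (by \cite[Theorem~2.1]{DVM}, since $F$ is continuous), and from $S_{n,s}h_k \to h_k$ uniformly on compact subsets of $[0,1]\setminus\{x_k\}$ (by Theorem \ref{th:shepard}).

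For the behaviour at a fixed discontinuity point $x_i$, I would split
\[
S_{n,s}f(x_i) = z_n + x_n, \qquad z_n := S_{n,s}F(x_i) + \sum_{k\neq i} c_k S_{n,s}h_k(x_i), \qquad x_n := c_i S_{n,s}h_i(x_i).
\]
The previous step gives $z_n \to z := F(x_i) + \sum_{k\neq i} c_k h_k(x_i) = f(x_i+0)$, while Theorem \ref{th:shepard} gives all the indices of the scalar sequence $S_{n,s}h_i(x_i)$, hence of $x_n$. It then suffices to transport these indices through the affine map $t \mapsto z + c_i t$: the limit points $\tilde d_i$, $g_s(m/q)$ and (for $s=1$) $1/2$ of $S_{n,s}h_i(x_i)$ become $d_i$, $g_{s,i}(m/q)$ and $(f(x_i+0)+f(x_i-0))/2$ respectively. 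In case (ii), for every Peano-Jordan measurable set $A\subset\R$ and $s>1$, Theorem \ref{th:shepard}(ii) yields
\[
i(S_{n,s}f(x_i); A) = i(c_i S_{n,s}h_i(x_i); A-z) = |g_s^{-1}(c_i^{-1}(A-z))| = |g_{s,i}^{-1}(A)|,
\]
once one unwinds the definition of $g_{s,i}$. For $s=1$ the same principle gives index $1$ at the midpoint, and the subsequences of $S_{n,1}h_i(x_i)$ converging to $0$ and $1$ (whose set of indices has density zero by Remark \ref{rm:sum=1} and Proposition \ref{prop:char_index}) translate into subsequences of $S_{n,1}f(x_i)$ converging to $f(x_i+0)$ and $f(x_i-0)$.

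The only non-trivial step is the passage from indices of $x_n$ to indices of $y_n := z_n + x_n$ in the presence of a perturbation $z_n \to z$; this is exactly the argument used around inequalities \eqref{eq:111} and \eqref{eq:222} in the proof of Theorem \ref{th:Lagrange_extended}, and it transfers verbatim here, since it only uses that $z_n - z$ eventually lies in $B_\varepsilon$ for arbitrary $\varepsilon>0$. No new analytic input on the Shepard kernels is required: Theorem \ref{th:shepard} already supplies all the asymptotic information, and the work is essentially bookkeeping in the decomposition together with a careful application of this translation-of-index lemma.
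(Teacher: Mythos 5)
Your proposal is correct and follows essentially the same route as the paper, whose proof consists precisely of the decomposition $f=F+\sum_{k=1}^N c_k h_k$ with $c_k=f(x_k-0)-f(x_k+0)$, $\tilde d_k=\frac{d_k-f(x_k+0)}{f(x_k-0)-f(x_k+0)}$, followed by the instruction to argue as in Theorem \ref{th:Lagrange_extended} with Theorem \ref{th:shepard} in place of Theorem \ref{th:Lagrange}. You in fact spell out more detail than the paper does, correctly adapting the orientation of the jump in \eqref{eq:def_hS} (so that $z=f(x_i+0)$ here rather than $f(x_i-0)$) and transporting the indices through $t\mapsto z+c_i t$ via the argument around \eqref{eq:111} and \eqref{eq:222}.
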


\begin{proof}
We assume $x_1<\dots<x_N$. For every $k=1,\dots,N$, we set $c_k:=f(x_k-0)-f(x_k+0)$ and
$\tilde{d_k}:=\frac{d_k-f(x_k+0)}{f(x_k-0)-f(x_k+0)}$; consequently we can write $f=F+\sum _{k=1}^{N}c_kh_k$,
where $F\in C([0,1])$ and $h_k:=h_{x_k,\tilde d_k}$ for every $k=1,\dots,N$.
So $f\in C([0,1])+H$ and since Shepard operators converge uniformly in $C([0,1])$ (see e.g. \cite[Theorem 2.1]{DVM}),
we can argue as in Theorem \ref{th:Lagrange_extended} using Theorem \ref{th:shepard} in place of
Theorem \ref{th:Lagrange}.
\qev
\end{proof}

\end{document}